\def\beg{\begin}
\def\bequ{\begin{equation}}
\def\enqu{\end{equation}}
\def\bes{\begin{split}}
\def\ens{\end{split}}
\def\bews{\begin{ews}}
\def\beqn{\begin{eqnarray}}
\def\enqn{\end{eqnarray}}
\def\beq*{\begin{equation*}}
\def\enq*{\end{equation*}}
\def\bqn*{\begin{eqnarray*}}
\def\eqn*{\end{eqnarray*}}
\def\bary{\begin{array}}
\def\eary{\end{array}}
\def\bpma{\begin{pmatrix}}
\def\epma{\end{pmatrix}}
\def\bvma{\begin{Vmatrix}}
\def\evma{\end{Vmatrix}}
 \newtheorem{thm}{Theorem}[section]
 \newtheorem{lem}[thm]{Lemma}
 \numberwithin{equation}{section}
\def\al{\alpha}
\def\be{\beta}
\def\ga{\gamma}
\def\de{\delta}
\def\ep{\epsilon}
\def\ze{\zeta}
\def\et{\eta}
\def\th{\theta}
\def\la{\lambda}
\def\rh{\rho}
\def\ta{\tau}
\def\De{\Delta}
\def\Ph{\Phi}
\def\Ps{\Psi}
\def\Om{\Omega}
\def\Q{\mathbb Q}
\def\R{\mathbb R}
\def\P{\mathbb P}
\def\E{\mathbb E}
\def\H{\mathbb H}
\def\N{\mathbb N}
\def\V{\mathbb V}
\def\sF{\mathscr F}
\def\sD{\mathscr D}
\def\sB{\mathscr B}
\def\sL{\mathscr L}
\def\d{\mathrm{d}}
\def\ff{\frac}
\def\ra{\rightarrow}
\def\<{\langle}
\def\>{\rangle}
\def\sq{\sqrt}
\def\tld{\tilde}
\def\we{\wedge}
\def\1{\mathbbm{1}}
\title{{\bf Strong Feller Property and Irreducibility for Non-Linear Monotone SPDEs }
}
\author{
{\bf Shao-Qin Zhang }\\
\footnotesize{School of Statistics and Mathematics, Central University of Finance and Economics, Beijing 100081, China}\\
\footnotesize{Email: zhangsq@cufe.edu.cn}\\
}
\begin{document}
\maketitle

\begin{abstract}
Strong Feller property and irreducibility are study for a class of non-linear monotone stochastic partial differential equations with multiplicative noise. H\"older continuity of the associated Markov semigroups are discussed in some special cases. The main results are applied to several examples such as stochastic porous media equations, stochastic fast diffusion equations.
\end{abstract}\noindent

AMS Subject Classification: Primary 60H15
\noindent

Keywords: Strong Feller, irreducibility, multiplicative noise, H\"older continuity, stochastic partial differential equations

\vskip 2cm

\section{Introduction}
In the present paper, we concern the strong Feller property and irreducibility of the transition semigroups corresponding to a class of non-linear monotone stochastic partial differential equations(SPDEs) with multiplicative noise. Both of them are important in the study of ergodicity of stochastic systems, for example, it is well known that strong Feller property and irreducibility ensure the uniqueness of invariant measure of the transition semigroup, see \cite{PZ95,DPZ1996}. Indeed, there have been abundant literatures contributing to the study of these two properties for semilinear SPDEs with either additive or multiplicative noise. It is a pity that we can not to list them all here, and only mention monograph \cite{DPZ1996}  for a systematic introduction on ergodicity for semilinear SPDEs and \cite{PZ95} for the fist result on this topic about semilinear SPDEs with multiplicative noise. On the other hand, the non-linear monotone SPDEs basing on the variational approach have been intensively investigated recently. The research of this type SPDEs can be dated back to Pardoux \cite{Pardoux}. Further development is given by Krylov and Rozovoskii \cite{KrylovR}.  For the existence and uniqueness of strong and weak solutions, we refer to \cite{DPR04,BBDR,DRRW06,RenRW,LiuR10,Liu13,LiuR13}. Besides this, we shall mention that \cite{BDR,RW13,Gess13} for extinction problem of solutions, and \cite{Wang2007,LiuW08,Liu09,WBook13'}  for Harnack inequalities and related topics, and \cite{Wang14b} for the first result on the gradient estimate for this type of equations. Though the dimensional free Harnack inequalities introduced by \cite{Wang97} are powerful tool to study strong Feller property and irreducibility, the existing results on this type inequalities for such equations are mainly for additive noise.

Before further discussion, let us recall the framework of this type of equations briefly. Let $(\H,\<\cdot,\cdot\>, |\cdot|)$ be a real separable Hilbert space. Let $\V$ be a reflective Banach space continuously and densely embedded into $\H$ and $\V^*$ be the dual space with respect to $\H$. We denote the dualization between $\V^*$ and $\V$ by $_{\V^*}\<\cdot,\cdot\>_{\V}$. Then $_{\V^*}\<u,v\>_{\V}=\<u,v\>$ for $u\in \H$, $v\in \V$. $(\V,\H,\V^*)$ is called a Gelfand triple. Let $\sL(\H)$($\sL_2(\H)$) be the set of all bounded(resp. Hilbert-Schmidt) operators on $\H$. We denote the operator norm and Hilbert-Schmidt norm by $|\cdot|$ and $|\cdot|_2$ respectively. Consider the following non-Linear SPDEs on $\H$
\bequ\label{equ_main}
\d X(t)=A(X(t))\d t+B(X(t))\d W(t),
\enqu
where $\{W(t)\}_{t\geq 0}$ is cylindrical Brownian motion on $\H$ with respect to a complete filtered probability spaces $\Big(\Om,\sF,\P,\{\sF_t\}_{t\geq 0}\Big)$. The coefficients $$A: \V\ra \V^*,\ B: \V\ra \sL_2(\H)$$
are measurable and such that the following assumptions hold
\beg{enumerate}[({H}1)]
\item (Hemicontinuity) For all $v_1,v_2,v\in \V$, $\R\ni s\ra _{\V^*}\<A(v_1+sv_2),v\>_\V$ is continuous.\label{H_hemi}
\item (Monotonicity) There exists $K_1\in\R$ such that for all $v_1,v_2\in \V$,\label{H_mono}
$$2_{\V^*}\<A(v_1)-A(v_2),v_1-v_2\>_\V+|B(v_1)-B(v_2)|_2^2\leq K_1|v_1-v_2|^2.$$
\item (Coercivity) There exist $r>0$ and $c_1,c_3\in \R,c_2>0$ such that for all $v\in \V$,\label{H_coe}
$$2_{\V^*}\<A(v),v\>_\V+|B(v)|_2^2\leq c_1-c_2|v|_\V^{r+1}+c_3|v|^2.$$
\item (Growth) There exists $c_3,c_4\in\R$ such that\label{H_grow}
$$|_{\V^*}\<A(u),v\>_\V|\leq c_4+c_5(|u|_\V^{r}+|v|_\V^{r+1}+|u|^2+|v|^2).$$
\end{enumerate}
\beg{defn}
A continuous $\H$-valued adapted process $X$ is called a solution to (\ref{equ_main}), if
$$\E\int_0^T\Big(|X(t)|_\V^{r+1}+|X(t)|^2\Big)\d t<\infty, T>0,$$
and $\P$-a.s.
$$X(t)=X(0)+\int_0^tA(X(s))\d s+\int_0^tB(X(s))\d s, t>0.$$
\end{defn}
According to \cite{PRo2007,LiuR10}, for $X(0)\in L^2(\Om\ra \H,\P)$, (H\ref{H_hemi})-(H\ref{H_grow}) ensure that (\ref{equ_main}) has a unique pathwise continuous solution. Denote $\sB_b(\H)$ as all bounded Borel measurable functions on $\H$. Let $X^x(t)$ be the solution of $\eqref{equ_main}$ starting from $x$, and $$P_tf(x)=\E f(X^x(t)),\ f\in \sB_b(\H),\ x\in \H.$$
Then $P_t$ is the associated Markov semigroup of (\ref{equ_main}).

The main aim of this paper is to study the strong Feller property and irreducibility of $P_t$ via coupling by change of measure. The coupling used to prove strong Feller property here was first introduced in \cite{Wang2007}. In \cite{ZhangX09}, the strong Feller property and irreducibility for stochastic differential equations with non-Lipschitz and monotone coefficients are concerned using the coupling  as well as the approximative controllability method, moreover, the exponential ergodicity is obtained there. We shall follow the line of \cite{Wang2007} and \cite{ZhangX09}. Here, the drift term $A:\V\ra \V^*$ is usually singular, and can not be taken in the sense of usual nonlinear function in finite dimensional case. Moreover, though we shall assume that the noise term $B$ is non-degenerate,  it is Hilbert-Schmidt operator, that means it is weaker than the usual semilinear SPDEs or SDEs cases where $\sup_{x\in \H}|B^{-1}(x)|<\infty$, see \cite{PZ95,DPZ1996,ZhangX09}. So another detailed discussion for this type of equations is needed. As in \cite{ZhangX09}, or \cite{RoWang2010,Wang2011,ZhangSQ,WBook13'},  we shall introduce the following non-degenerate condition for the SPDEs discussed here:
\beg{itemize}
\item ({\bf{Non-degenerate condition}}) There exist a positive self-adjoint operator $B_0$, and a measurable function $\rh:\V\ra (0,\infty)$ which satisfies
    $$\inf_{v\in\V,|v|\leq R}\rh(v)>0,~R>0,$$
    such that
\bequ\label{nondenerate_B}
B(v)B^*(v)\geq \rh(v)B_0^2,\ v\in \V.
\enqu
\end{itemize}
Define the intrinsic norm induced by $B_0$ as follow
$$
|u|_{B_0}=\beg{cases}
|B_0^{-1}u|,~&u\in B_0(\H),\\
~\infty,~&u\notin B_0(\H).
\end{cases}
$$
 At last, we shall mention a recent result in \cite{Wang14b} which provided the gradient/H\"older estimates for this type of SPDEs for the first time. That is a quite strong result. But there, the noise term is split into a multiplicative part and an additive part  independently. It is different from us.

The following two theorems are devoted to the strong Feller property.
\beg{thm}\label{thm:sFeller}
Let $r\geq 1$. Assume that
\beg{enumerate}[(1)]
\item~(H\ref{H_hemi}), (H\ref{H_coe})-(H\ref{H_grow}) and the non-degenerate condition (\ref{nondenerate_B}) hold, $|\cdot|_{B_0}:\V\ra [0,\infty)$ is measurable, and for all $n\in \N$, there exists $C_n\in \R$ such that
\bequ\label{locally_B_Lip}
|B(v_1)-B(v_2)|_2\leq C_n|v_1-v_2|,\ v_1,v_2\in \V, |v_1|\vee|v_2|\leq n,
\enqu
\item~there exist $\th\in [2,+\infty)\cap (r-1,+\infty)$ and $K_1\geq 0,~\de_1>0$ such that
\beg{ews}\label{HofA}
&2_{\V^*}\<A(v_1)-A(v_2),v_1-v_2\>_\V+|B(v_1)-B(v_2)|_2^2\\
&\quad\leq -\de_1|v_1-v_2|_{B_0}^\th|v_1-v_2|^{r+1-\th}+K_1|v_1-v_2|^2,~~v_1,v_2\in \V.
\end{ews}
\end{enumerate}
Then
$P_t$ is strong Feller.
\end{thm}
\beg{thm}\label{thm:sFeller'}
Let $0<r<1$. Assume that
\beg{enumerate}[(1)]
\item~(H\ref{H_hemi}), (H\ref{H_grow}) and the non-degenerate condition (\ref{nondenerate_B}) hold, $|\cdot|_{B_0}:\V\ra [0,\infty)$ is measurable, and $B$ is locally Lipschitz in the sense of \eqref{locally_B_Lip},
\item~there exists $\th\geq \ff 4 {r+1}$, $K_1\geq 0,~\de_1>0$ and measurable function $h:\V\ra [0,+\infty)$ such that
\beg{ews}\label{HofA'}
&2_{\V^*}\<A(v_1)-A(v_2),v_1-v_2\>_\V+|B(v_1)-B(v_2)|_2^2\\
&\quad\leq -\ff {\de_1|v_1-v_2|_{B_0}^\th} {|v_1-v_2|^{\th-2}\Big(h(v_1)\vee h(v_2)\Big)^{1-r}}+K_1|v_1-v_2|^2,~~v_1,v_2\in \V,
\end{ews}
\item there exist $c_1,c_3\in \R,~c_2>0$ such that
\bequ\label{H_coe'}
2_{\V^*}\<A(v),v\>_{\V}+|B(v)|^2_2\leq c_1-c_2h^{1+r}(v)+c_3|u|^2,~~v\in \V.
\enqu
\end{enumerate}
Then $P_t$ is strong Feller.
\end{thm}
Typical examples for Theorem \ref{thm:sFeller} are stochastic porous media equations and stochastic p-Laplace equations. Theorem \ref{thm:sFeller'} can be applied to stochastic fast diffusion equations. Conditions \eqref{HofA}, \eqref{HofA'} and \eqref{H_coe'} have been used in the previous works on studying Harnack inequalities, see \cite{Wang2007,LiuW08,Liu09,WBook13'}. For technical reason, we still need local Lipschitz condition \eqref{locally_B_Lip}. If $\inf_{v\in \V} \rh(v)>0$, by some global conditions, we can get that $P_t f$ is H\"older continuous on $\H$ for all $f\in\sB_b(\H)$, see Lemma \ref{lmm:sFeller} and \ref{lmm:sFeller'}.

The next theorem is for irreducibility. $P_t$ is said to be irreducibility at $t>0$ if for arbitrary non empty open set $S\subset \H$ and all $x\in \H$,
$$P_t(x,S)\equiv P_t1_{S}(x)>0.$$
We generalize a result in \cite{BaDa} to multiplicative noise. For more about using approximative controllability method to prove the irreducibility, we would like to refer \cite{DPZ1996}.

\beg{thm}\label{thm:irreducibility}
Assume that (H\ref{H_hemi})-(H\ref{H_grow}) hold, the non-degenerate condition (\ref{nondenerate_B}) holds with
\bequ\label{rh_irr}
\inf_{v\in\V}\rh(v)>0,
\enqu
and $|\cdot|_{B_0}:\V\ra[0,+\infty)$ is measurable, and for all $n\in \N$, there exists $C_n$ such that
\bequ\label{con_B_2}
|B(u)|_2\leq C_n,\ u\in \V,~|u|\leq n.
\enqu
Then
$P_t$ is irreducible.
\end{thm}

Though we assume that $B$ is locally bounded on $\H$, we should point out that it follows from (H\ref{H_coe}) and (H\ref{H_grow}) that there exists constant $\tld c>0$ such that
\bequ\label{bounded_B}
|B(u)|_2^2\leq \tld c\Big(1+|u|_\V^{r+1}+|u|^2\Big).
\enqu

The rest parts of the paper are organized as follows. In the following section, we give the proofs of the main results. Simple applications and examples are provided in Section 3.

\section{Proof of main results}
Fix $T>0$. We shall concern strong Feller property and irreducibility for $P_T$. Firstly, we shall prove the strong Feller property for $P_T$ under some global conditions. Throughout this section, (H\ref{H_hemi})-(H\ref{H_grow}) are assumed to be hold.
\begin{lem}\label{lmm:sFeller}
Assume that (\ref{nondenerate_B}) and (\ref{HofA}) hold for some self-adjoint Hilbert-Schmidt operator $B_0$ and $\rh$ with $\inf_{v\in \V}\rh(v)=\ff 1 {c_0}>0$. \\
{\rm{(1)}}~If there exists $K_2\geq0$ such that
\bequ\label{strong_cond_B}
|(B(u)-B(v))^*(u-v)|\leq K_2\Big(|u-v|^2\we|u-v|\Big),~~u,v\in \V.
\enqu
Then $P_T f$ is $\ff {\th-r+1} {2\th}$-H\"older continuous on $\H$ for all $f\in \sB_b(\H)$.\\
{\rm{(2)}}~If $\th >r$ and there exists $K_2\geq 0$ such that
\bequ\label{weak_cond_B}
|(B(u)-B(v))^*(u-v)|\leq K_2|u-v|,~~u,v\in \V.
\enqu
Then $P_Tf$ is $\ff {\th-r+1} {2\th}$-H\"older continuous on $\H$ for all $f\in \sB_b(\H)$.
\end{lem}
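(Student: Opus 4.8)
The plan is to use the coupling by change of measure (Wang's coupling) as introduced in \cite{Wang2007} and further developed in \cite{ZhangX09}. Fix $x,y\in\H$ and $T>0$. I would construct a coupling $(X(t),Y(t))$ where $X$ solves \eqref{equ_main} from $x$ driven by the cylindrical Brownian motion $W$, and $Y$ solves the same equation from $y$ but driven by a shifted noise $\d W(t)+\xi(t)\,\d t$, where $\xi(t)$ is chosen so that the difference process $Z(t):=X(t)-Y(t)$ is forced to hit $0$ before time $T$. The natural choice, following \cite{Wang2007}, is to take a deterministic-in-modulus control $\xi(t)=\gamma(t)B^*(Y(t))(B(Y(t))B^*(Y(t)))^{-1}Z(t)$ for a suitable rate function $\gamma$ depending on $|Z(t)|$ and $|Z(t)|_{B_0}$. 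Then, applying Itô's formula to $|Z(t)|^2$ and using \eqref{HofA} together with the non-degeneracy $B(Y)B^*(Y)\geq \rho(Y)B_0^2\geq c_0^{-1}B_0^2$, one obtains an inequality of the form
\[
\d|Z(t)|^2 \leq \Bigl(-\delta_1|Z(t)|_{B_0}^\theta|Z(t)|^{r+1-\theta} + K_1|Z(t)|^2 - 2\gamma(t)\langle Z(t),Z(t)\rangle/(\text{something}) \bigr)\,\d t + \d M(t),
\]
where the coupling term contributes an extra negative drift. Choosing $\gamma(t)$ proportional to $|Z(t)|^{r+1-\theta}|Z(t)|_{B_0}^{\theta}/|Z(t)|^2$ times a large constant, one arranges that $|Z(t)|^2$ reaches $0$ by a coupling time $\tau\leq T$, in fact one wants $|Z(t)|\leq \phi(t)$ for an explicit $\phi$ with $\phi(T)=0$, e.g. $|Z(t)|^2 \le (|x-y|^{(\theta-r+1)/\theta} - ct)_+^{2\theta/(\theta-r+1)}$ after solving the comparison ODE $\dot u = -c\, u^{(\theta-r-1)/\theta}$ (this is where the exponent $(\theta-r+1)/(2\theta)$ appears, and where $\theta\ge 2$, $\theta>r-1$ is used to make the ODE well-posed and the power nonnegative).

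Next I would estimate the cost of the measure change. By Girsanov's theorem, $\d\tilde\P = R_T\,\d\P$ makes $\tilde W(t):=W(t)+\int_0^t\xi(s)\,\d s$ a cylindrical Brownian motion, with $R_T=\exp(-\int_0^T\langle\xi(s),\d W(s)\rangle - \tfrac12\int_0^T|\xi(s)|^2\,\d s)$. Since under $\tilde\P$ the process $Y$ has the law of $X^y$, we get $P_Tf(x)-P_Tf(y)=\E[f(X(T))(1-R_T)]$ when the coupling is successful (so $X(T)=Y(T)$), whence $|P_Tf(x)-P_Tf(y)|\le \|f\|_\infty \E|1-R_T|\le \|f\|_\infty(\E|R_T-1|)$, and by standard arguments this is controlled by $(\E_{\tilde\P}\int_0^T|\xi(s)|^2\,\d s)^{1/2}$ up to constants, or more carefully by $(\E R_T\log R_T)^{1/2}$-type bounds. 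The key computation is then to show $\int_0^T|\xi(s)|^2\,\d s<\infty$ and in fact bounded by a constant times $|x-y|^{(\theta-r+1)/\theta}$: one has $|\xi(s)|^2 \le \gamma(s)^2|B_0^{-1}Z(s)|^2 \cdot c_0 = c_0\gamma(s)^2|Z(s)|_{B_0}^2$, and substituting the choice of $\gamma$ and the bound $|Z(s)|\le\phi(s)$, the integral $\int_0^T\phi(s)^{2(r+1-\theta)}|Z(s)|_{B_0}^{2\theta}\cdot(\dots)\,\d s$ must be shown finite. This is precisely where conditions \eqref{strong_cond_B} and \eqref{weak_cond_B} enter: they let one bound $|(B(X)-B(Y))^*Z|=|(B(X)-B(Y))^*(X-Y)|$, which is exactly the cross term appearing when one also tracks the evolution of $|Z(t)|_{B_0}^2$ (i.e. of $B_0^{-1}Z(t)$) via Itô's formula, controlling the martingale part of $|Z|_{B_0}^2$ and ensuring $|Z(s)|_{B_0}$ does not blow up faster than a negative power of $\phi(s)$ allows. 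In case (1) the bound $|u-v|^2\wedge|u-v|$ suffices for all $\theta\ge2$; in case (2) the weaker linear bound forces the restriction $\theta>r$ to keep the relevant integral convergent near $s=T$.

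Finally, I would assemble the pieces: the Hölder exponent $(\theta-r+1)/(2\theta)$ comes from $|P_Tf(x)-P_Tf(y)|\lesssim \|f\|_\infty (\E\int_0^T|\xi|^2)^{1/2}\lesssim \|f\|_\infty |x-y|^{(\theta-r+1)/(2\theta)}$. The main obstacle, I expect, is the second step: because $B$ is only Hilbert--Schmidt and $B^{-1}$ is unbounded (unlike the classical strong-Feller setting where $\sup_x|B^{-1}(x)|<\infty$), the control $\xi$ involves $B_0^{-1}Z$ and one must carefully track the intrinsic norm $|Z(t)|_{B_0}$ along the coupling — establishing that $t\mapsto B_0^{-1}Z(t)$ is well-defined, is a semimartingale, and that its norm obeys a usable differential inequality. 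Controlling the quadratic variation of $B_0^{-1}Z$ requires the non-degeneracy \eqref{nondenerate_B}, the local Lipschitz structure of $B$ built into \eqref{strong_cond_B}/\eqref{weak_cond_B}, and a localization/stopping-time argument to make all the Itô computations rigorous before passing to the limit; handling the finiteness of $\E\int_0^T|\xi|^2\,\d s$ under $\tilde\P$ (rather than $\P$) via a Girsanov/Novikov bootstrap is the delicate quantitative heart of the argument.
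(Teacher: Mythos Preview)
Your overall architecture (coupling by change of measure, Girsanov, estimate the cost) is right, but there is a genuine gap in the mechanism you propose for making the coupling succeed.

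\medskip
\textbf{The gap.} You want to force $|Z(t)|\le\phi(t)$ for a deterministic $\phi$ with $\phi(T)=0$, via a comparison ODE. This works in the additive-noise setting of \cite{Wang2007} because the martingale term in $\d|Z|^2$ vanishes. Here it does not: It\^o's formula for $|Z|^2$ carries the term $2\langle (B(X)-B(Y))\,\d W, Z\rangle$, which is nonzero under either \eqref{strong_cond_B} or \eqref{weak_cond_B}. No choice of drift $\gamma(t)$ will give you a pathwise bound $|Z(t)|\le\phi(t)$, and hence no deterministic coupling time $\tau\le T$. Your related plan of tracking $|Z(t)|_{B_0}^2$ by It\^o's formula has the same difficulty compounded: there is no reason $B_0^{-1}Z(t)$ is a semimartingale (or even in $\H$) for all $t$, since $B_0$ is Hilbert--Schmidt.

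\medskip
\textbf{What the paper does instead.} The paper takes a \emph{simpler} drift, $|x-y|^{\alpha}\,Z/|Z|^{\epsilon}$ with parameters $0<\alpha<\epsilon<1$, and does \emph{not} require $\tau\le T$. The error is split as
\[
|P_Tf(x)-P_Tf(y)|\le |f|_\infty\bigl(\E|1-R_T|+\P(\tau\ge T)\bigr).
\]
The second term is handled by It\^o's formula for $|Z|^{\epsilon}$, yielding $\P(\tau\ge T)\lesssim |x-y|^{\epsilon-\alpha}$. For the first term, one never tracks $B_0^{-1}Z$ directly: applying It\^o to $|Z|^{2-2\gamma}$ with $2\gamma=r+1-\theta(1-\epsilon)$, the dissipation in \eqref{HofA} gives control of $\int_0^{T\wedge\tau}|Z|_{B_0}^{\theta}/|Z|^{\theta\epsilon}\,\d t$ in expectation, and by H\"older (using $\theta\ge2$) this bounds $\int |Z|_{B_0}^{2}/|Z|^{2\epsilon}\,\d t$, which is precisely the Girsanov cost since $|\tilde B^{-1}(Y)Z|\le c_0^{1/2}|Z|_{B_0}$. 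Conditions \eqref{strong_cond_B}/\eqref{weak_cond_B} are \emph{not} used to control $|Z|_{B_0}$; they enter only when one redoes the It\^o computation for $|Z|^{2-2\gamma}$ under the shifted measure $R_{t\wedge\eta_n}\P$, to show $\sup_n\E R_{\eta_n}\log R_{\eta_n}<\infty$ uniformly (so $R$ is a true martingale). Finally one balances $\alpha$ against $\epsilon$ to get the exponent $\tfrac{\theta-r+1}{2\theta}$; in case~(2) the choice $\epsilon=1-r/\theta$ forces $\gamma=1/2$ and needs $\theta>r$.
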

\begin{proof} We prove {\rm{(1)}} firstly. Let $\ep\in (0,1)$ such that $0\vee (r-1)<\th(1-\ep)<(2r)\we(r+1)$. Consider the following coupling equations
\beg{ews}\label{coupling}
\d X(t)&=A(X(t))\d t+B(X(t))\d W(t),\ X(0)=x\\
\d Y(t)&=A(Y(t))\d t+B(Y(t))\d W(t)+|x-y|^{\al}\ff {X(t)-Y(t)} {|X(t)-Y(t)|^{\ep}}\d t,\ Y(0)=y,
\end{ews}
where $0<\al<\ep$. By It\^o's formula,
\beg{ews}\label{ItoforX_Y}
&\d |X(t)-Y(t)|^2\\
&\quad=2\<A(X(t))-A(Y(t)),X(t)-Y(t)\>\d t+|B(X(t))-B(Y(t))|_2^2\d t\\
&\quad\quad +2\<(B(X(t))-B(Y(t)))\d W(t),X(t)-Y(t)\>-2|x-y|^{\al}|X(t)-Y(t)|^{2-\ep}\d t.
\end{ews}
According to (\ref{HofA}) and (\ref{ItoforX_Y}), we also have
\beg{ews}
&\d |X(t)-Y(t)|^2\\
&\quad\leq -\de_1|X(t)-Y(t)|_{B_0}^\th|X(t)-Y(t)|^{r+1-\th}\d t+K_1|X(t)-Y(t)|^2\d t\\
&\quad\quad +2\<(B(X(t))-B(Y(t)))\d W(t),X(t)-Y(t)\>-2|x-y|^{\al}|X(t)-Y(t)|^{2-\ep}\d t.
\end{ews}
Let $2\ga=r+1-\th(1-\ep)$. Then $\ga>0$, $r+1-\th-2\ga=-\th\ep$, moreover
$$\ga=\ff {r+1-\th(1-\ep)} 2< \ff {r+1-(r-1)\vee0} 2\leq 1.$$
So
\beg{ews}\label{Ito_ga}
&\d |X(t)-Y(t)|^{2-2\ga}\\
&\quad\leq-(1-\ga)\de_1\ff {|X(t)-Y(t)|_{B_0}^{\th}} {|X(t)-Y(t)|^{\th\ep}}\d t+(1-\ga)K_1|X(t)-Y(t)|^{2-2\ga}\d t\\
&\quad\quad+2(1-\ga)\Big\<\ff{(B(X(t))-B(Y(t)))^*(X(t)-Y(t))} {|X(t)-Y(t)|^{2\ga}},\d W(t)\Big\>.
\end{ews}
Let
$$\ta_n=\inf\{t>0~|~|X(t)-Y(t)|\geq \ff 1 n\},~~\ta=\inf\{t>0~|~|X(t)-Y(t)|=0\}.$$
Then
\bequ\label{int_t}
\E\int_0^{T\we\ta}\ff {|X(t)-Y(t)|_{B_0}^{\th}} {|X(t)-Y(t)|^{\th\ep}}\d t\leq \ff {e^{(1-\ga)K_1T}} {(1-\ga)\de_1} |x-y|^{2-2\ga}.
\enqu
We can define
$$\ze_n=\inf\{s>0~|~\int_0^s\ff {|X(t)-Y(t)|_{B_0}^{\th}} {|X(t)-Y(t)|^{\th\ep}}\d t\geq n\}.$$
Due to \eqref{int_t}, $\lim_{n\ra \infty}\ze_n\geq \ta$. Let
$$\eta_n=\ze_n\we\ta_n\we T.$$
Then $\lim_{n\ra\infty}\eta_n=\ta\we T$. Let $\tld B^{-1}(y)=B^*(y)(B(y)B^*(y))^{-1}$,
\bequ\label{tldW}
\tld W(t)=W(t) +\int_0^{t\we\ta}|x-y|^\al\ff {\tld B^{-1}(Y(s))(X(s)-Y(s))} {|X(s)-Y(s)|^{\ep}}\d s,
\enqu
and
\beg{ews}\label{R_T}
R_t&=\exp\Big\{|x-y|^\al\int_0^{t\we\ta}\Big\<\ff {\tld B^{-1}(Y(s))(X(s)-Y(s))} {|X(s)-Y(s)|^{\ep}},\d W(s)\Big\>\\
&\qquad-\ff {|x-y|^{2\al}} 2\int_0^{t\we\ta}\ff {|\tld B^{-1}(Y(s))(X(s)-Y(s))|^2} {|X(s)-Y(s)|^{2\ep}}\d s\Big\}.
\end{ews}
By Novikov's condition, $\{W(s)\}_{0\leq s\leq \eta_n\we t}$ is cylindrical Brownian motion on $\H$ under probability measure $R_{t\we\et_n}\P$. Next, we shall prove that $\{W(t)\}_{0\leq t\leq T}$ is cylindrical Brownian motion under $R_T\P$.

In the following, we denote $\Q_{n,t}(\Q)$ as the probability measure $R_{t\we\eta_n}\P$(resp. $R_T\P$) and $\E_{\Q_{n,t}}$($\E_{\Q}$) the expectation with respect to $R_{\eta_n,t}\P$(resp. $R_T\P$). According to \eqref{Ito_ga} and \eqref{tldW},
\beg{ews*}
&\d |X(t)-Y(t)|^{2-2\ga}\\
&\quad\leq-(1-\ga)\de_1\ff {|X(t)-Y(t)|_{B_0}^{\th}} {|X(t)-Y(t)|^{\th\ep}}\d t+(1-\ga)K_1|X(t)-Y(t)|^{2-2\ga}\d t\\
&\quad\quad-2(1-\ga)|x-y|^\al\Big\<\ff{(B(X(t))-B(Y(t)))\tld B^{-1}(Y(t))(X(t)-Y(t))} {|X(t)-Y(t)|^{2\ga}},\ff {X(t)-Y(t)} {|X(t)-Y(t)|^{2\ga}}\Big\>\d t\\
&\quad\quad+2(1-\ga)\Big\<\ff{(B(X(t))-B(Y(t)))^*(X(t)-Y(t))} {|X(t)-Y(t)|^{2\ga}},\d \tld W(t)\Big\>.
\end{ews*}
By H\"older inequality, there is $C>0$ such that
\beg{ews*}
&\Big|-2(1-\ga)|x-y|^\al\Big\<\ff{(B(X(t))-B(Y(t)))\tld B^{-1}(Y(t))(X(t)-Y(t))} {|X(t)-Y(t)|^{2\ga}},\ff {X(t)-Y(t)} {|X(t)-Y(t)|^{\ep}}\Big\>\Big|\\
&\quad\leq 2(1-\ga)c_0|x-y|^{\al}K_2\Big(|X(t)-Y(t)|^{1-2\ga}\we|X(t)-Y(t)|^{2-2\ga}\Big)\ff {|X(t)-Y(t)|_{B_0}} {|X(t)-Y(t)|^{\ep}}\\
&\quad\leq C(1-\ga)^{\ff {\th} {\th-1}}|x-y|^{\ff {\al\th} {\th-1}}\Big(|X(t)-Y(t)|^{1-2\ga}\we|X(t)-Y(t)|^{2-2\ga}\Big)^{\ff {\th} {\th-1}}\\
&\quad\quad+\ff {(1-\ga)\de_1} {2}\ff {|X(t)-Y(t)|^\th_{B_0}} {|X(t)-Y(t)|^{\th\ep}}.
\end{ews*}
Moreover, since $(1-2\ga)\ff {\th} {\th-1}\leq 2-2\ga$, we get that
$$\Big(|X(t)-Y(t)|^{1-2\ga}\we|X(t)-Y(t)|^{2-2\ga}\Big)^{\ff {\th} {\th-1}}\leq |X(t)-Y(t)|^{2-2\ga}.$$
Hence
\beg{ews*}
&\d |X(t)-Y(t)|^{2-2\ga}\\
&\quad\leq -\ff {\de_1} {2}\ff {|X(t)-Y(t)|^\th_{B_0}} {|X(t)-Y(t)|^{\th\ep}}\d t+2(1-\ga)\Big\<\ff{(B(X(t))-B(Y(t)))^*(X(t)-Y(t))} {|X(t)-Y(t)|^{2\ga}},\d \tld W(t)\Big\>\\
&\quad\quad+\Big(C|x-y|^{\ff {\al\th} {\th-1}}(1-\ga)^{\ff 1 {\th-1}}+K_1\Big)(1-\ga)|X(t)-Y(t)|^{2-2\ga}\d t.
\end{ews*}
Taking expectation with respect to $\Q_{n,s}$, we have that
$$\E_{\Q_{n,s}}\int_0^{s\we\et_n}\ff {|X(t)-Y(t)|^\th_{B_0}} {|X(t)-Y(t)|^{\th\ep}}\d t\leq \ff {2e^{(C|x-y|^{\ff {\al\th} {\th-1}}(1-\ga)^{\ff 1 {\th-1}}+K_1)(1-\ga)s}} {(1-\ga)\de_1}|x-y|^{2-2\ga},~~0\leq s\leq T.$$
Consequently,
\bequ\label{RlogR}
\sup_{s\in[0,T],n\geq 1}\E R_{\eta_n\we s}\log R_{\eta_n\we s}\leq \ff {2T^{\ff {\th} {\th-2}}e^{\Big(C|x-y|^{\ff {\al\th} {\th-1}}(1-\ga)^{\ff 1 {\th-1}}+K_1\Big)\ff {2(1-\ga)T} {\th}}} {((1-\ga)\de_1)^{2/\th}}|x-y|^{\ff {4(1-\ga)} {\th}+2\al},
\enqu
due to H\"older inequality. So, $\{R_{\eta_n\we s}\}_{s\in[0,T],n}$ is uniformly integrable. Consequently, $\{R_s\}_{s\in[0,T]}$ is uniformly integrable martingale.  Then, by Girsanov's theorem, $\{\tld W(t)\}_{0\leq t\leq T}$ is cylindrical Wiener process under the probability measure $R_T\P$. Moreover, $Y(t)$ is a weak solution of the following equation
\bequ
\d Y(t)=A(Y(t))\d t+B(Y(t))\d \tld W(t),\ Y(0)=y.
\enqu
Thus, we get that
\beg{ews}
&|P_Tf(x)-P_Tf(y)|=|\E f(X(T))-\E R_Tf(Y(t))|\\
&=|\E f(Y(T))(1-R_T)|+|\E(f(X(T))-f(Y(T)))\mathbbm{1}_{\{\ta\geq T\}}|\\
&\leq |f|_\infty|\Big(\E|1-R_T|+\P(\ta\geq T)\Big),~f\in\sB_b(H).
\end{ews}
The last task is to give estimations of the two term in the last inequality.

By \eqref{RlogR},~\eqref{int_t} and
$$|1-e^x|\leq  xe^x+2|x|\we 1,~~x\in \R,$$
there is $C>0$ such that
\beg{ews}
\E|1-R_T|&\leq \E R_T\log R_T+2\E|\log R_T|\\
&\leq C\Big(|x-y|^{\ff {2(1-\ga)} {\th}+\al}+|x-y|^{\ff {4(1-\ga)} {\th}+2\al}\Big).
\end{ews}
On the other hand, according to (\ref{ItoforX_Y}), we have
\beg{align*}
\d |X(t)-Y(t)|^{\ep}&=\ep/2 |X(t)-Y(t)|^{\ep-2}\d |X(t)-Y(t)|^2\\
&\quad +\ff 1 2 \ep(\ep/2-1)|X(t)-Y(t)|^{\ep-4}|(B(X(t))-B(Y(t)))^*(X(t)-Y(t))|^2\d t\\
&\leq \ff {\ep K_1} {2}|X(t)-Y(t)|^{\ep}\d t-|x-y|^\al\d t\\
&\quad+\ep\Big\<(B(X(t))-B(Y(t)))\d W(t),\ff {X(t)-Y(t)} {|X(t)-Y(t)|^{2-\ep}}\Big\>.
\end{align*}
Then
\beq*
\E |X(s\we\ta)-Y(s\we\ta)|^\ep\leq |x-y|^\ep + \ff {\ep K_1} {2}\E\int_0^{s\we\ta}|X(t)-Y(t)|^\ep\d t-|x-y|^\al\E(s\we\ta).
\enq*
By Gronwall's inequality, we have that
\beg{ews*}
\E |X(s\we\ta)-Y(s\we\ta)|^\ep&\leq |x-y|^\ep e^{\ff {\ep K_1s} {2}},\\
\int_0^s \E|X(t\we\ta)-Y(t\we\ta)|^\ep\d t&\leq \ff {2|x-y|^\ep (e^{\ff {\ep K_1s} {2}}-1)}{\ep K_1s},~s\in[0,T].
\end{ews*}
Hence
$$\E(s\we\ta)\leq |x-y|^{\ep-\al}\ff {e^{\ff {\ep K_1s} {2}}-1+s} s,~s\in[0,T],$$
and
$$\P(\ta\geq T)\leq \ff {\E(\ta\we T)} {T}\leq |x-y|^{\ep-\al}\ff {e^{\ff {\ep K_1T} {2}}-1+T^2} {T^2},~s\in[0,T].$$
Therefore, there exists constant $C$ depending on $|f|_\infty, \de_1,|B_0|,K_1,K_2,T,\ep,r,\th$ such that
$$|P_Tf(x)-P_Tf(y)|\leq C|x-y|^\be,$$
with
$$\be=\Big(\ff {2(1-\ga)} {\th}+\al\Big)\we (\ep-\al).$$
Let $\al=\ff {\ep} 2-\ff {1-\ga} {\th}$. Then $\be=\ff {\th-r+1} {2\th}$. The proof of {\rm{(1)}} is completed.

For {\rm{(2)}}. Since $\th>r$, we can choose $\ep=1-\ff r {\th}$. Repeating the argument used in {\rm{(1)}} with $\ga=1/2$ and \eqref{weak_cond_B} instead of \eqref{strong_cond_B}, we obtain the claim in {\rm{(2)}}.
\end{proof}

\bigskip

\noindent{\bf{\emph{Proof of Theorem \ref{thm:sFeller}:~}}} We truncate $B$ as follow
$$B_R(v)=
\begin{cases}
B(v),~~~~~|v|\leq R,\\
B(\ff {Rv} {|v|}),~~~|v|> R,
\end{cases}$$
where $R>0$.  By \eqref{locally_B_Lip}, it is clear that $B_R$ is a bounded Lipschitz map, hence \eqref{strong_cond_B} and \eqref{weak_cond_B} in Lemma \ref{lmm:sFeller} hold for $B_R$ with some constant $C_R>0$. Moreover, we have the following non-degenerate condition:
$$B_R(v)B^*_R(v)\geq \rh_R B_0^2=(\sq\rh_R B_0)^{2},$$
where $\rh_R=\inf_{|v|\leq R}\rh(v)$. Let
$$\ta_R^x=\inf\{t>0~|~|X^x(t)|\geq R\},~~~\ta_R^y=\inf\{t>0~|~|X^y(t)|\geq R\},$$
and $$X^{R,x}(t),~X^{R,y}(t)$$
be the solution of the following equation
\bequ\label{equ_tranc}
\d X^R(t)=A(X^R(t))\d t+B_R(X^R(t))\d W(t)
\enqu
starting form $x,~y$ respectively. Let $P_T^Rf(x)=\E f(X^{R,x}(T))$. By the uniqueness of (\ref{equ_tranc}), $X^x(T)=X^{R,x}(T)$ and $X^y(T)=X^{R,y}(T)$ for $T< \ta_R^x\we\ta_R^y$. So
\beg{align}\label{inequ:sFeller}
&|P_Tf(x)-P_Tf(y)|\nonumber\\
&\quad\leq|\E(f(X^x(T))-f(X^y(T)))1_{\{\ta_R^x\we\ta_R^y> T\}}|+|f|_\infty\Big(\P\Big(\ta_R^x\leq T\Big)+\P\Big(\ta_R^y\leq T\Big)\Big)\nonumber\\
&\quad= |\E(f(X^{R,x}(T))-f(X^{R,y}(T)))1_{\{\ta_R^x\we\ta_R^y> T\}}|+|f|_\infty\Big(\P\Big(\ta_R^x\leq T\Big)+\P\Big(\ta_R^y\leq T\Big)\Big)\\
&\quad\leq |P_T^Rf(x)-P_T^Rf(y)|+2|f|_\infty\Big(\P\Big(\ta_R^x\leq T\Big)+\P\Big(\ta_R^y\leq T\Big)\Big).\nonumber
\end{align}
Applying Lemma \ref{lmm:sFeller} to (\ref{equ_tranc}), $P^R_T$ is strong Feller. On the other hand, by It\^o's formula and (H\ref{H_coe}), we have that
\beg{ews}\label{ItoforX}
\d |X^x(t)|^2\leq c_1-c_2|X^x(t)|_\V^{r+1}+c_3|X^x(t)|^2\d t+2\<B(X^x(t))\d W(t), X^x(t)\>,~x\in \H.
\end{ews}
According to Gronwall's inequality, we obtain that
\beg{ews}\label{inequ:X}
\E|X^x(t)|^2+\int_0^t\E|X^x(t)|_\V^{r+1}\d t\leq \Big(\ff {c_1t+|x|^2} {c_2\we 1}\Big)\Big(1+c_3e^{\ff {c_3t} {c_2\we 1}}\Big),~t\in [0,T],~x\in \H.
\end{ews}
Combining (\ref{ItoforX}), (\ref{inequ:X}) with B-D-G inequality and (H\ref{H_coe}), (H\ref{H_grow}), there exists $C>0$ which is independent of $T$ and $x$ such that
\bequ
\E\sup_{t\in[0,T]}|X^x(t)|^2\leq C\Big(1+e^{CT}\Big)\Big(T+|x|^2\Big),~x\in \H.
\enqu
Since
\beg{ews*}
&\P\Big(\ta_R^x\leq T\Big)\leq \P\Big(\sup_{t\in[0,T]}|X(t)|\geq R\Big)\\
&\leq \ff 1 {R^2}\E\sup_{t\in[0,T]}|X(t)|^2\leq \ff C {R^2}\Big(1+e^{CT}\Big)\Big(T+|x|^2\Big),~x\in \H,
\end{ews*}
we have
$$\sup_{z\in B(x,l)}\P\Big(\ta_R^z\leq T\Big)\leq \ff C {R^2}\Big(1+e^{CT}\Big)\Big(T+(|x|+l)^2\Big),~x\in \H,~l>0.$$
Letting $y\ra x$ in (\ref{inequ:sFeller}) first(assume that $|y-x|\leq 1$) and then $R\uparrow\infty$, we obtain the strong Feller property of $P_T$:
$$\lim_{y\ra x}|P_Tf(x)-P_Tf(y)|\leq 2|f|_\infty\lim_{R\uparrow \infty}\sup_{z\in B(x,1)}\P(\ta_T^z\leq T)=0.$$
\qed

\bigskip

To prove Theorem \ref{thm:sFeller'}, we only have to prove the following lemma.
\beg{lem}\label{lmm:sFeller'}
Assume that conditions of Theorem \ref{thm:sFeller'} hold with $\inf_{v\in\V}\rh(v)= \ff 1 {c_0}>0$ and the locally Lipschitz condition \eqref{locally_B_Lip} replaced by the boundedness of $B$: there exists $C>0$ such that
$$|B(v)|\leq C,~~v\in \V.$$
Then $P_T$ is locally $(\ff {2\th} {3\th+4}\we \ff 1 2)$-H\"older continuous.
\end{lem}
\beg{proof}
Let $\ep=\ff {\th} {\th+2}$. Consider the coupling equations as \eqref{coupling}. By \eqref{HofA'} and It\^o's formula, we have
\beg{ews}
\d |X(t)-Y(t)|^2&\leq -\ff {\de_2|X(t)-Y(t)|_{B_0}^\th} {|X(t)-Y(t)|^{\th-2}(h(X(t))\vee h(Y(t)))^{1-r}}\d t\\
&\quad+K_1|X(t)-Y(t)|^2\d t-2|x-y|^\al|X(t)-Y(t)|^{2-\ep}\d t\\
&\quad+2\Big\<(B(X(t))-B(Y(t)))\d W(t),X(t)-Y(t)\Big\>.
\end{ews}
Then
\begin{align}\label{Ito'}
&\d |X(t)-Y(t)|^{2\ep}\nonumber\\
&\quad\leq -\ff {\ep\de_2|X(t)-Y(t)|^\th_{B_0}} {|X(t)-Y(t)|^{2(1-\ep)+\th-2}(h(X(t))\vee h(Y(t)))^{1-r}}\d t\nonumber\\
&\quad\quad+\ep K_1|X(t)-Y(t)|^{2\ep}\d t+2\ep\Big\<(B(X(t))-B(Y(t)))\d W(t),\ff {X(t)-Y(t)} {|X(t)-Y(t)|^{2(1-\ep)}}\Big\>\\
&\quad\quad+\ep(\ep-1)|X(t)-Y(t)|^{2(\ep-2)}|(B(X(t))-B(Y(t)))^*(X(t)-Y(t))|^2\d t.\nonumber
\end{align}
Let
$$\ta_n=\inf\{t>0~|~|X(t)-Y(t)|\leq 1/n\},~~\ta=\inf\{t>0~|~|X(t)-Y(t)|=0\}.$$
It is clear that
\beg{ews}\label{zeta}
\E\int_0^{t\we\ta}\ff {|X(s)-Y(s)|^\th_{B_0}} {|X(s)-Y(s)|^{2(1-\ep)+\th-2}(h(X(s))\vee h(Y(s)))^{1-r}}\d s\leq \ff { e^{\ep K_1t}|x-y|^{2\ep}} {\ep\de_2},~~t>0.
\end{ews}
Let
$$\ze_n=\inf\Big\{t>0~|~\int_0^t\ff {|X(s)-Y(s)|_{B_0}^\th} {|X(s)-Y(s)|^{\th\ep}(h(X(s))\vee h(Y(s)))^{1-r}}\d s\geq n\Big\}.$$
By \eqref{zeta}, we have
$$\lim_{n\ra\infty}\ze_n\geq \ta, ~~\P\mbox{-a.s}.$$
On the other hand, by It\^o's formula and \eqref{H_coe'}, we get that
\beg{ews}
\d |X(t)|^2\leq c_1\d t-c_2h^{1+r}(X(t))\d t+c_3|X(t)|^2\d t+2\<B(X(t))\d W(t),X(t)\>
\end{ews}
So
$$\E\int_0^{t}h^{1+r}(X(s))\d s\leq \ff {e^{c_3T}} {c_2}\Big[\ff {c_1(1-e^{-c_3T})} {c_3}+|x|^2\Big],~~t\leq T.$$
According to It\^o's formula, it is easy to see that
$$\E\int_0^{t}h^{1+r}(Y(s))\d s<\infty,~~t>0.$$
Define
\bqn*
&&\xi_n=\inf\Big\{t>0~|~\int_0^t \Big[h^{1+r}(X(s))\vee h^{1+r}(Y(s))\Big]\d s\geq n\Big\},\\
&&\eta_n=\ta_n\we\ze_n\we\xi_n\we T.
\eqn*
Then $\lim_{n\ra\infty}\xi_n=\infty$ and $\lim_{n\ra\infty}\eta_n=T\we\ta.$
Let $\tld W(t)$ and $R_t$ defined as in \eqref{tldW} and \eqref{R_T}. Since
\beg{ews}\label{ForRlogR}
&\int_0^{t\we\et_n}\ff {|X(s)-Y(s)|_{B_0}^2} {|X(s)-Y(s)|^{2\ep}}\d s\\
&\quad\leq \Big(\int_0^{t\we\et_n}\ff {|X(s)-Y(s)|_{B_0}^\th} {|X(s)-Y(s)|^{\th\ep}(h(X(s))\vee h(Y(X(s))))^{1-r}}\d s\Big)^{\ff 2 {\th}}\\
&\quad\quad\times\Big(\int_0^{t\we\et_n}(h(X(s))\vee h(Y(s)))^{\ff {2(1-r)} {\th-2}}\d s\Big)^{\ff {\th-2} {\th}},
\end{ews}
$\{\tld W(s)\}_{0\leq s\leq \eta_n\we t}$ is cylindrical Brownian Motion under probability measure $\Q_{n,t}:=R_{\eta_n\we t}\P$ according to Noikov's condition, moreover
\bequ\label{Q_nY}
\E_{\Q_{n,s}}\int_0^{\eta_n\we s}h^{1+r}(Y(s))\d s\leq \ff {e^{c_3T}} {c_2}\Big[\ff {c_1(1-e^{-c_3T})} {c_3}+|y|^2\Big],~s\in[0,T].
\enqu

Next, we shall prove that $\{\tld W(t)\}_{0\leq t \leq T}$ is cylindrical Brownian Motion on $\H$. To this end, we shall prove that $\sup_{t\in[0,T]}\E(R_t\log R_t)$ is finite.
By It\^o's formula and \eqref{H_coe'},
\beg{ews}
\d |X(t)|^2&\leq c_1\d t-c_2h^{1+r}(X(t))\d t+c_3|X(t)|^2\d t+2\<B(X(t))\d W(t),X(t)\>\\
&=c_1\d t-c_2h^{1+r}(X(t))\d t+c_3|X(t)|^2\d t+2\<B(X(t))\d \tld W(t),X(t)\>\\
&\quad-\ff {2|x-y|^\al\Big\<B(X(t))\tld B^{-1}(Y(t))(X(t)-Y(t)),X(t)\Big\>} {|X(t)-Y(t)|^\ep}\d t.
\end{ews}
Since $\th  \geq \ff {4} {r+1}$, we have $\ff {1-r} {(1+r)\th} +\ff 1 2+\ff 1 {\th}\leq 1$. Then, by H\"older inequality, there exist nonnegative constants $\tld c_1,~\tld c_2,~\tld c_3$ such that
\beg{ews*}
&\ff {|x-y|^{\al}|X(t)-Y(t)|_{B_0}|X(t)|} {|X(t)-Y(t)|^\ep}\\
&\quad\leq \tld c_3+\tld c_1|x-y|^{2\al}|X(t)|^2+\ff {c_2} 2(h(X(t))\vee h(Y(t)))^{1+r}\\
&\quad\quad+ \ff {\tld c_2|X(t)-Y(t)|^{\th}_{B_0}} {|X(t)-Y(t)|^{\th\ep}(h(X(t))\vee h(Y(t)))^{1-r}}.
\end{ews*}
So, there exist $\hat c_1,~\hat c_2,~\hat c_3\in \R^+$ such that
\beg{ews*}
\d |X(t)|^2&\leq \hat c_1\d t-c_2h^{1+r}(X(t))\d t+\hat c_3(1+|x-y|^{2\al})|X(t)|^2\d t\\
&\quad +\ff {\hat c_2|X(t)-Y(t)|_{B_0}^\th} {|X(t)-Y(t)|^{\th\ep}(h(X(t))\vee h(Y(t)))^{1-r}}\d t\\
&\quad+\ff {c_2} 2(h^{1+r}(X(t))+h^{1+r}(Y(t)))\d t+2\<B(X(t))\d \tld W(t),X(t)\>.
\end{ews*}
Denote $\E_{\Q_{n,s}}$ as the expectation with respective to probability $\Q_{n,s}$. We obtain that
\beg{align*}
&\E_{\Q_{n,s}}\int_0^{\eta_n\we s}e^{-\hat c_3(1+|x-y|^{2\al})t}h^{1+r}(X(t))\d t\\
&\quad\leq \ff {2|x|^2} {c_2}+\ff {\hat c_1(1-e^{-{\hat c_3(1+|x-y|^{2\al})T}})} {c_2\hat c_3(1+|x-y|^{2\al})}+\E_{\Q_{n,s}}\int_0^{\eta_n\we s}e^{-\hat c_3(1+|x-y|^{2\al})t}h^{1+r}(Y(t))\d t \\
&\quad\quad+\ff {2\hat c_2} {c_2}\E_{\Q_{n,s}}\int_0^{\eta_n\we s}\ff {e^{-\hat c_3(1+|x-y|^{2\al})t}|X(t)-Y(t)|_{B_0}^\th} {|X(t)-Y(t)|^{\th\ep}(h(X(t))\vee h(Y(t)))^{1-r}}\d t\\
&\quad\leq \ff {2|x|^2} {c_2}+\ff {\hat c_1(1-e^{-{\hat c_3(1+|x-y|^{2\al})T}})} {c_2\hat c_3(1+|x-y|^{2\al})}+ \ff {e^{c_3T}} {c_2}\Big[\ff {c_1(1-e^{-c_3T})} {c_3}+|x|^2\Big]\\
&\quad\quad+\ff {2\hat c_2} {c_2}\E_{\Q_{n,s}}\int_0^{\eta_n\we s}\ff {e^{-\hat c_3(1+|x-y|^{2\al})t}|X(t)-Y(t)|_{B_0}^\th} {|X(t)-Y(t)|^{\th\ep}(h(X(t))\vee h(Y(t)))^{1-r}}\d t,~s\in[0,T],
\end{align*}
combining this with \eqref{Q_nY}, we get that
\beg{ews*}
&\E_{\Q_{n,s}}\int_0^{\eta_n\we s}(h(X(t))\vee h(Y(t)))^{1+r}\d t\\
&\quad\leq \E_{\Q_{n,s}}\int_0^{\eta_n\we s}h^{1+r}(X(t))\d t+\E_{\Q_{n,s}}\int_0^{\eta_n\we s}h^{1+r}(Y(t))\d t\\
&\quad\leq \ff {2e^{\hat c_3(1+|x-y|^{2\al})T}} {c_2}\Big\{|x|^2+\ff {\hat c_1(1-e^{-\hat c_3(1+|x-y|^{2\al})T})} {\hat c_3(1+|x-y|^{2\al})}\\
&\quad\quad+\ff {e^{c_3T}} 2\Big[\ff {c_1(1-e^{-c_3T})} {c_3}+|y|^2\Big]\Big\}+\ff {e^{c_3T}} {c_2}\Big[\ff {c_1(1-e^{-c_3T})} {c_3}+|y|^2\Big]\\
&\quad\quad+\ff {2\hat c_2} {c_2} e^{\hat c_3(1+|x-y|^{2\al})}\E_{\Q_n}\int_0^{\eta_n}\ff {|X(t)-Y(t)|_{B_0}^\th} {|X(t)-Y(t)|^{\th\ep}(h(X(t))\vee h(Y(t)))^{1-r}}\d t,~s\in[0,T].
\end{ews*}
By \eqref{Ito'}, we have
\begin{align*}
&\d |X(t)-Y(t)|^{2\ep}\\
&\quad\leq -\ff {\ep\de_2|X(t)-Y(t)|^\th_{B_0}} {|X(t)-Y(t)|^{\th-2\ep}(h(X(t))\vee h(Y(t)))^{1-r}}\d t+\ep K_1|X(t)-Y(t)|^{2\ep}\d t\\
&\quad\quad+2\ep\Big\<(B(X(t))-B(Y(t)))\d \tld W(t),\ff {X(t)-Y(t)} {|X(t)-Y(t)|^{2(1-\ep)}}\Big\>\\
&\quad\quad-\ff {2\ep|x-y|^\al\Big\<(B(X(t))-B(Y(t)))\tld B^{-1}(Y(t))(X(t)-Y(t)),X(t)-Y(t)\Big\>} {|X(t)-Y(t)|^{2-\ep}}.
\end{align*}
Since $B(\cdot)$ is bounded, we have that
\begin{align*}
&\Big|\ff {2\ep|x-y|^\al\Big\<(B(X(t))-B(Y(t)))\tld B^{-1}(Y(t))(X(t)-Y(t)),X(t)-Y(t)\Big\>} {|X(t)-Y(t)|^{2-\ep}}\Big|\\
&\quad\leq \ff {2c_0\ep|x-y|^\al|X(t)-Y(t)|_{B_0}|(B(X(t))-B(Y(t)))^*(X(t)-Y(t))|} {|X(t)-Y(t)|^{2-\ep}}\\
&\quad=\ff {2C\ep|x-y|^\al|X(t)-Y(t)|_{B_0}|X(t)-Y(t)|^{2\ep-1}(h(X(t))\vee h(Y(t)))^{\ff {1-r} {\th}}} {|X(t)-Y(t)|^{\th\ep}(h(X(t))\vee h(Y(t)))^{\ff {1-r} {\th}}}\\
&\quad\leq \ff {\de_2\ep|X(t)-Y(t)|_{B_0}^\th} {2|X(t)-Y(t)|^{\th\ep}(h(X(t))\vee h(Y(t)))^{1-r}}\\
&\quad\quad+\tld C|x-y|^{\ff {\al\th} {\th-1}}|X(t)-Y(t)|^{\ff {\th(\th-2)} {(\th+2)(\th-1)}}(h(X(t))\vee h(Y(t)))^{\ff {1-r} {\th-1}},
\end{align*}
for some $\tld C>0$. Since $\ff {\th(\th-2)} {2(\th+2)(\th-1)\ep}+\ff {1-r} {(\th-1)(r+1)}<1$, by H\"older inequality, we get that for all $l>0$, there is $c(l)>0$ such that
\beg{ews*}
\d |X(t)-Y(t)|^{2\ep}&\leq -\ff {\de_1\ep|X(t)-Y(t)|_{B_0}^\th} {2|X(t)-Y(t)|^{\th\ep}(h(X(t))\vee h(Y(t)))^{1-r}}\d t\\
&\quad+(\ep K_1+c(l))|X(t)-Y(t)|^{2\ep}\d t\\
&\quad+l|x-y|^{2\al}\Big[(h(X(t))\vee h(Y(t)))^{1+r}+1\Big]\d t\\
&\quad+2\ep\Big\<(B(X(t))-B(Y(t)))\d \tld W(t),\ff {X(t)-Y(t)} {|X(t)-Y(t)|^{2(1-\ep)}}\Big\>,~s\in[0,T].
\end{ews*}
Hence
\beg{ews*}
&\E_{\Q_{n,s}}\int_0^{\eta_n\we s}\ff {|X(t)-Y(t)|^{\th}_{B_0}} {|X(t)-Y(t)|^{\th\ep}(h(X(t))\vee h(Y(t)))^{1-r}}\d t\\
&\quad\leq\ff {2le^{(\ep K_1+c(l))T}} {\de_2\ep} |x-y|^{2\al}\Big[\E_{\Q_{n,s}}\int_0^{\eta_n\we s}(h(X(t))\vee h(Y(t))^{1+r}\d t+\ff {1-e^{-(\ep K_1+c(l))T}} {\ep K_1+c(l)}\Big]\\
&\quad\quad+\ff {2e^{(\ep K_1+c(l))T}} {\de_2\ep}|x-y|^{2\ep},~s\in[0,T].
\end{ews*}
If $|x-y|\leq \Big(\ff {\de_2\ep} {4le^{(\ep K_1+c(l))T}}\Big)^{\ff 1 {2\al}}$, then
\beg{ews*}
&\E_{\Q_n}\int_0^{\eta_n}\ff {|X(t)-Y(t)|^{\th}_{B_0}} {|X(t)-Y(t)|^{\th\ep}(h(X(t))\vee h(Y(t)))^{1-r}}\d t\\
&\quad\leq \ff {4} {\de_2\ep} \Big[e^{(\ep K_1+C(l))T}|x-y|^{2\ep}+|x-y|^{2\al}\ff {\de_2\ep(1-e^{-(\ep K_1+C(l))})} {\ep K_1+C(l)}\Big].
\end{ews*}
According to \eqref{ForRlogR} and Fatou lemma, we get that for $|x-y|\leq \Big(\ff {\de_2\ep} {4le^{(\ep K_1+c(l))T}}\Big)^{\ff 1 {2\al}}$
\begin{align*}
\sup_{s\in[0,T]}\E R_s\log R_s &=\ff {|x-y|^{2\al}} 2\E_{\Q}\int_0^{T\we\ta}\ff {|\tld B^{-1}(Y(t))(X(t)-Y(t))|^2} {|X(t)-Y(t)|^{2\ep}}\d t\\
&\leq \ff {c_0|x-y|^{2\al}} 2\Big(\E_{\Q}\int_0^{T\we\ta}\ff {|X(t)-Y(t)|^{\th}_{B_0}} {|X(t)-Y(t)|^{\th\ep}(h(X(t))\vee h(Y(t)))^{1-r}}\d t\Big)^{\ff 2 {\th}}\\
&\quad\times\Big(\E_{\Q}\int_0^{T\we\ta}(h(X(t))\vee h(Y(t)))^{\ff {2(1-r)} {\th-2}}\d t\Big)^{\ff {\th-2} {\th}}\\
&\leq \ff {c_0T^{\ff{\th-2} {\th}}|x-y|^{2\al}} 2\Big(\E_\Q\int_0^{T\we\ta}\ff {|X(t)-Y(t)|^{\th}_{B_0}} {|X(t)-Y(t)|^{\th\ep}(h(X(t))\vee h(Y(t)))^{1-r}}\d t\Big)^{\ff 2 {\th}}\\
&\quad\times \Big(\E_\Q\int_0^{T\we\ta}(h(X(t))\vee h(Y(t)))^{1+r}\d t\Big)^{^{\ff {2(1-r)} {\th(1+r)}}}\\
&\leq C(T,\ep,K_1,|x|,|y|)\Big(|x-y|^{2\ep}+|x-y|^{2\al}\Big)^{\ff 2 {\th}}|x-y|^{2\al}
\end{align*}
From \eqref{ForRlogR}, we have
\beg{ews*}
&\E\int_0^{T\we\ta}\ff {|X(s)-Y(s)|^{2}_{B_0}} {|X(s)-Y(s)|^{2\ep}}\d s\\
&\quad\leq T^{\ff {\th-2} {\th}}\Big(\E\int_0^{T\we\ta}\ff {|X(s)-Y(s)|^{\th}_{B_0}} {|X(s)-Y(s)|^{\th\ep}(h(X(s))\vee h(Y(s)))^{1-r}}\d s\Big)^{\ff 2 {\th}}\\
&\quad\quad\times\Big(\E\int_0^{T\we\ta}(h(X(s))\vee h(Y(s)))^{1+r}\d s\Big)^{\ff {2(1-r)} {(1+r)\th}}\\
&\quad\leq C(|x|,|y|,T,\de_2,\ep,\th)|x-y|^{\ff {4\ep} {\th}}.
\end{ews*}
Then
\beg{ews*}
\E|1-R_T|&\leq \E R_T\log R_T +2\E |\log R_T|\\
&\leq C(T,\ep,K_1,\de_2,|x|,|y|)\Big(|x-y|^{2\ep}+|x-y|^{2\al}\Big)^{\ff 2 {\th}}|x-y|^{2\al}\\
&\quad+C\Big[|x-y|^{\al+\ff {2\ep} {\th}}+|x-y|^{2\al+\ff {4\ep} {\th}}\Big],~|x-y|\leq \Big(\ff {\de_2\ep} {4le^{(\ep K_1+c(l))T}}\Big)^{\ff 1 {2\al}}.
\end{ews*}
Repeating the argument used in Theorem \ref{lmm:sFeller}, we get that there is $C>0$ depending on $|f|_\infty$, $\de_2$, $|B_0|$, $K_1$, $K_2$, $T$, $\ep$, $r$, $\th$, $|x|$, $|y|$ such that
$$|P_Tf(x)-P_Tf(y)|\leq C|x-y|^{\be},~|x-y|\leq \Big(\ff {\de_2\ep} {4le^{(\ep K_1+c(l))T}}\Big)^{\ff 1 {2\al}}$$
with $\be=(\al+\ff {2\ep} {\th})\we(\ff {2(\th+2)\al} {\th})\we (\ep-\al)$. Since $0<\al<\ep$ arbitrary, we can choose that $\be=\ff {2\th} {3\th+4}\we \ff 1 2$. Therefore, we complete the proof.
\end{proof}

\bigskip

Let $\sD(A)=\{x~|~A(x)\in \H,~x\in \V\}$. The next lemma will be used to character the domain of the non-linear operator $A$ as an operator on $\H$. The proof of the following lemma follows from \cite{Bar2010} completely. For the sake of completeness, we include the proof here.
\beg{lem}\label{dense}
Assume that (H\ref{H_hemi})-(H\ref{H_grow}). Then $\sD(A)$ is dense in $\H$.
\end{lem}
\beg{proof}
Let $\tld A(x)=\Big[1/2(K_1\vee c_3)-A\Big](x),~x\in \sD(A)$. Then, according to (H\ref{H_hemi})-(H\ref{H_coe}) and \cite[Theorem 2.4. and Corollary 2.3.]{Bar2010}, $\tld A$ is surjective and maximal monotone operator on $\H$. For $x\in \H$. Let $x_n=(I+1/n\tld A)^{-1}(x),~n\in \N$. Then
$$|x_n|^2+1/n\<\tld A(x_n),x_n\>=\<x_n,x\>\leq |x_n|\cdot|x|.$$
By (H\ref{H_coe}) and (H\ref{H_grow}), we have that
\bequ\label{bound_x_n}|x_n|^2+c_2|x_n|_\V^{r+1}/n\leq |x|\cdot|x_n|+c_1/n,\enqu
and
\bequ\label{x_x_n}
|x-x_n|_{\V^*}=|\tld A(x_n)|_{\V^*}/n\leq (K_1\vee c_3)|x_n|_{\V^*}/(2n)+c_5(|x_n|_\V^{r}+|x_n|^2+1)/n.
\enqu
Then
$$\sup_{n}\Big(|x_n|+|x_n|_\V^{r+1}/n\Big)<\infty$$
due to \eqref{bound_x_n}. So
$$\lim_{n\ra \infty}|x-x_n|_{\V^*}=0, $$
and there exists a subsequence denoted also by $\{x_n\}$ such that $x_n$ convergent in $\H$ weakly. Consequently, $x_n$ convergent to $x$ weakly in $\H$. That means for all $x\in \H$, there exists a sequence $\{x_n\}\subset \sD(A)$ that convergent to $x$ weakly. But $\overline{\sD(A)}$ is convex according to \cite[Corollary 2.5.]{Bar2010}. Therefore $x\in\overline{\sD(A)}$, i.e. $\overline{\sD(A)}=\H$.
\end{proof}

\bigskip

\noindent{\bf{\emph{Proof of Theorem \ref{thm:irreducibility}:~}}}
Let $x\in \H$, and $X(t)$ be the solution of \eqref{equ_main} starting from $x$. To prove the irreducibility of $P_T$, we have to prove that for all $y_0\in \H$ and $l>0$,
$$\P(|X(T)-y_0|\leq l)>0 \mbox{~or~} \P(|X(T)-y_0|> l)<1.$$
By Lemma \ref{dense}, we can choose $y\in \sD(A)$ such that $|y-y_0|\leq l/4$. So, we only have to prove that
$$\P\Big(|X(T)-y|> \ff {3l} 4 \Big)<1.$$
To this end, we need some preparations. The proof due to \cite{BaDa,DPZ1996,ZhangX09} essentially.

Fix $y$. Let $R>0$. Firstly, we consider the following multivalued equation
\bequ\label{multivalued}
\d z(t)=A(z(t))\d t-C_R\mathrm{sgn}(z(t)-y)\d t,~~t\in[t_1,T],
\enqu
where
$$C_R=\ff {K_1(R+|y|)} {2\Big(1-e^{-K_1(T-t_1)/2}\Big)}+|A(y)|,$$
and $\mathrm{sgn}$ is a multivalued mapping on $\H$
$$
\mathrm{sgn}(h)=\beg{cases}
~~~~\displaystyle{{h}/|h|},~~&h\neq 0,\\
~\{h~|~|h|\leq 1\},~~&h=0.
\end{cases}
$$
Due to Lemma \ref{dense}, $A$ is quasi-m-accretive on $\H$. So there is a unique mild solution to \eqref{multivalued} for arbitrary $z\in\H$ according to \cite[Corollary 4.1]{Bar2010}. We denote the solution with initial value $z$ by $z(t,z)$. Moreover, $z(\cdot,\cdot)\in C([t_1,T]\times\H;\H)$ due to \cite[Theorem 4.2]{Bar2010}. By differentiation formula for norm of solution and some necessary regularization, or see the proof of \cite[Lemma 2.3]{BaDa}, one can get that
\bequ\label{inequ:z}
|z(t,z)-y|^2\leq |z-y|^2e^{K_1(T-t_1)},~~t\in [t_1,T],
\enqu
moreover, if $|z|\leq R$, then $z(T,z)=y$. Let
$$v(t,z)=\ff {z(t,z)-y} {|z(t,z)-y|} 1_{\{z(t,z)\neq y\}}+\ff {A(y)} {C_R}1_{\{z(t,z)=y\}},~~z\in\H,$$
and $\tld Y_z(t,h)$ be the solution of the following equation with initial point $h\in \H$
$$\d \tld Y_z(t)=A(\tld Y_z(t))\d t-C_Rv(t,z)\d t,~t\in[t_1,T].$$
Then $v(t,\cdot)\in\sB_b(\H)$ for all $t\in[t_1,T]$, and for $|z|\leq R$, we have
\bequ\label{equ_y=z}
\tld Y_z(t,z)=z(t,z),~t\in[t_1,T].
\enqu
In deed, let $T_1=\inf\{t\in[t_1,T]~|~z(t,z)=y\}$. Before $T_1$, it is clear that $\tld Y_z(t,z)=z(t,z)$. Since $\tld Y_z(\cdot,z)$ and $z(\cdot,z)$ are continuous, $\tld Y_z(T_1,z)=z(T_1,z)=y$. Starting from $T_1$, $z(t,z)\equiv y$ and then $\tld Y_z(t,z)$ satisfies
$$\d \tld Y_z(t,z)=A(\tld Y_z(t,z))\d t-A(y)\d t,~\tld Y_z(T_1,z)=y,~t\geq T_1.$$
By differentiation formula and (H\ref{H_mono}), we have
$$|\tld Y_z(t,z)-y|^2\leq |y-y|^2e^{-K_1(t-T_1)}=0,~t\geq T_1.$$
So, $z(t,z)=\tld Y_z(t,z),~t\in[t_1,T]$.

Let $\ep>0$, $X_R(t_1)=X(t_1)1_{\{|X(t_1)|\leq R\}}$, and $Y^\ep$ be the solution of the following equation
$$\d Y^\ep(t)=A(Y^\ep(t))\d t-C_R(\ep B_0^{-1}+I)^{-1}v(t,X_R(t_1))\d t,\ t\geq t_1,\ Y^\ep(t_1)=X_R(t_1).$$
Since $v(t,\cdot)$ is measurable for all $t\in[t_1,T]$, it is clear that $\tld Y^\ep(t)\in \sF_{t_1}$, and adapted consequently.
By differentiation formula,
\beg{ews}\label{IforY_Y}
&\d |\tld Y_{X_R(t_1)}(t,X_R(t_1))-Y^\ep(t)|^2\\
&\quad=2_{\V^*}\Big\<A(\tld Y_{X_R(t_1)}(t,X_R(t_1)))-A(Y^\ep(t)),\tld Y_{X_R(t_1)}(t,X_R(t_1))-Y^\ep(t)\Big\>_\V\d t\\
&\quad\quad-2C_R\Big\<\Big((\ep B_0^{-1}+I)^{-1}-I\Big)v(t,X_R(t_1)),\tld Y(t)-Y^\ep(t)\Big\>\d t\\
&\quad\leq (K_1+ C_R)\Big|\tld Y_{X_R(t_1)}(t,X_R(t_1))-Y^\ep(t)\Big|^2\d t+ C_R \Big|\Big((\ep B_0^{-1}+I)^{-1}-I\Big)v(t, X_R(t_1))\Big|^2\d t.
\end{ews}
Then we obtain that for $t_1 \leq t\leq T$
\bequ\label{bounded_Y_Y_1}
|\tld Y_{X_R(t_1)}(t,X_R(t_1))-Y^\ep(t)|^2\leq C_Re^{(K_1+C_R )(T-t_1)}\int_{t_1}^{T}\Big|\Big((\ep B_0^{-1}+I)^{-1}-I\Big)v(t, X_R(t_1))\Big|^2\d t.
\enqu
Since $|(\ep B_0^{-1}+I)^{-1}-I|\leq 1$ and $|v(t,X_R(t_1))|\leq 1$, we have, in particularly at time $T$,
\bequ\label{lim}
\lim_{\ep\ra 0}\E|\tld Y_{X_R(t_1)}(T,X_R(t_1))-Y^\ep(T)|^2=\lim_{\ep\ra 0}\E|y-Y^\ep(T)|^2=0.
\enqu
On the other hand, due to $\inf_{x\in(0,T]}|\ff {1-e^{-x}} {x}|\geq e^{-T}$, we have
$$C_R(T-t_1)\leq  K_1e^T(R+|y|)+|A(y)|T.$$
Hence, it follows from \eqref{bounded_Y_Y_1} that there exists a constant  $C_{R,T,y,K_1}$ depending on $R, T,y,K_1$  such that
\bequ\label{bound_Y_Y}
\sup_{t_1\in[0,T]}\sup_{t\in[t_1,T]}|\tld Y_{X_R(t_1)}(t,X_R(t_1))-Y^\ep(t)|^2\leq C_{R,T,y,K_1}.
\enqu

Let $\tld X$ be the solution of the following equation
\beq*
\d \tld X(t)=A(\tld X(t))\d t + B(\tld X(t))\d W(t)-C_R\Big(\ep B_0^{-1}+I\Big)^{-1}v(t,X_R(t_1))1_{\{t>t_1\}}\d t,\ \tld X(0)=x.
\enq*
This equation is well define, since $\Big(C_R\Big(\ep B_0^{-1}+I\Big)^{-1}v(t,X_R(t_1))\Big)_{t\in[t_1,T]}$ is a known adapted process. By It\^o's formula, we get that
\beg{ews*}
\d |\tld X(t)-Y^\ep(t)|^2&= 2\<A(\tld X(t))-A(Y^\ep(t)),\tld X(t)-Y^\ep(t)\>\d t\\
&\quad+2\<\tld X(t)-Y^\ep(t),B(\tld X(t))\d W(t)\>+|B(\tld X(t))|^2_2\d t\\
&\leq K_1|\tld X(t)-Y^\ep(t)|^2\d t+(2|B(\tld X(t))|_2|B(Y^\ep(t))|_2+|B(Y^\ep(t))|_2^2)\d t\\
&\quad+2\<\tld X(t)-Y^\ep(t),B(\tld X(t))\d W(t)\>,\ t>t_1.
\end{ews*}
Moreover, according to \eqref{bound_Y_Y},~\eqref{inequ:z}, \eqref{equ_y=z} and \eqref{con_B_2}, there exists a constant depending on $R, T,y,K_1$ such that
$$\sup_{t\in[t_1,T]}|B(Y^\ep(t))|_2\leq C_{R,T,y,K_1}.$$
So, by Gronwall inequality,
\beg{ews*}
&\E |\tld X(T)-Y^\ep(T)|^2\\
&\quad\leq e^{K_1(T-t_1)}\Big(\E|X(t_1)-Y^\ep(t_1)|^2+C_{R,T,y,K_1}\E\int_{t_1}^{T}|B(\tld X(t))|_2\d t+C_{R,T,y,K_1}(T-t_1)\Big)\\
&\quad\leq e^{K_1T}\Big(\E\Big[|\tld X(t_1)|^21_{\{|X(t_1)|\geq R\}}\Big]+C_{R,T,y,K_1}(T-t_1)+C_{R,T,y,K_1}\E\int_{t_1}^{T}|B(\tld X(t))|_2\d t\Big).
\end{ews*}
The fist term in the last inequality goes to $0$ as $R$ tending to infinity, and the second term goes to $0$ as $t_1$ tending to $T$ if $R$ is fixed. What remains to do is to prove that the last term goes to $0$ when fixing $R$ and letting $t_1$ tend to $T$. In fact, by It\^o's formula, (H\ref{H_coe}), (H\ref{H_grow}) and
$$\Big|(\ep B_0^{-1}+I)^{-1}v(t,X_R(t_1))\Big|\leq 1,$$
we have that there exists a constant $\tld C>0$ which is independent of $t_1$ and $\ep$ such that
$$\E\sup_{t\in[0,T]}|\tld X(t)|^2+\E\int_{0}^T|\tld X(t)|_\V^{r+1}\d t<\tld C.$$
According to H\"older inequality and \eqref{bounded_B}, there exists a constant $c>0$, independent of $t_1$ and $\ep$, such that
\beg{ews*}
\E\int_{t_1}^{T}|B(\tld X(t))|_2\d t&\leq  c\E\int_{t_1}^{T}\Big(|\tld X(t)|_\V^{r+1}+|X(t)|^2+1\Big)^{1/2}\d t\\
&\leq c\sq{T-t_1}.
\end{ews*}
Therefore,
\bequ\label{last}
\E|Y^\ep(T)-\tld X(T)|^2\leq e^{K_1T}\Big(\E\Big[|\tld X(t_1)|^21_{\{|X(t_1)|\geq R\}}\Big]+C_{T,K_1,R,y}\sq{T-t_1}\Big).
\enqu

Now, we can prove that
\bequ\label{Prob_X_y}
\P\Big(|X(T)-y|> \ff {3l} 4\Big)<1.
\enqu
Due to \eqref{lim} and \eqref{last}, we choose $R$ large enough and then $\ep$ and $T-t_1$ small enough such that
$$\E|Y^\ep(T)-\tld X(T)|^2\leq \ff {l^2} {36},~\E|y-Y^\ep(T)|^2\leq \ff {l^2} {144}.$$
Let
\beg{ews*}
\tld R_T&=\exp\Big\{C_R\int_0^{T}\Big\<\tld B^{-1}(\tld X(t))(\ep B_0^{-1}+I)^{-1}v(t,X_R(t_1))1_{\{t> t_1\}},\d W(t)\Big\>\\
&\quad -\ff {C_R^2} 2 \int_0^T \Big|\tld B^{-1}(\tld X(t))(\ep B_0^{-1}+I)^{-1}v(t,X_R(t_1))1_{\{t> t_1\}}\Big|^2\d t\Big\}.
\end{ews*}
Due to (\ref{nondenerate_B}) and \eqref{rh_irr}(it is no harm to assume that $\rh\geq 1$), we obtain that
\beg{ews*}
&\E \exp\Big\{\ff {C_R^2} 2 \int_0^T \Big|\tld B^{-1}(\tld X(t))(\ep B_0^{-1}+I)^{-1}v(t,X_R(t_1))1_{\{t> t_1\}}\Big|^2\d t\Big\}\\
&\quad\leq \E \exp\Big\{ \Big(\ff { K_1(R+|y|)} {2\sq{2}(1-e^{-K_1(T-t_1)/2})}+\ff {|A(y)|} {\sq{2}}\Big)^2 \int_{t_1}^T\Big|B_0^{-1}(\ep B_0^{-1}+I)^{-1}v(t,X_R(t_1))\Big|^2\d t\Big\}\\
&\quad\leq \exp\Big\{ \ff {(R+|y|)^2 K^2_1 (T-t_1)} {4\ep^2(1-e^{-K_1(T-t_1)/2})^2} +|A(y)|^2(T-t_1) \Big\}<\infty.
\end{ews*}
Then, according to Girsanov's theorem,
$$\tld W(t):=W(t)+C_R\int_0^{t}\Big\<\tld B^{-1}(\tld X(s))(\ep B_0^{-1}+I)^{-1}v(s,X(t_1))1_{\{s> t_1\}},\d W(s)\Big\>,\ t\in[0,T]$$
is cylindrical Brownian motion under the probability measure $\tld R_T\P$ and $\tld R_T\P$ is equivalent to $\P$. Moreover, the law of $\tld X(T)$ under $\tld R_T\P$ equal to that of $X(T)$ under $\P$. Thus, under the probability measure $\P$, the law of $\tld X(T)$ is equivalent to that of $X(T)$. So, to prove (\ref{Prob_X_y}), we only have to prove that $$\P\Big(|\tld X(T)-y_0|> \ff {3l} 4\Big)<1.$$
In fact,
\beg{ews*}
&\P\Big(|\tld X(T)-y|\geq \ff {3l} 4\Big)\leq\P\Big(|\tld X(T)-Y^\ep(T)|+|Y^\ep(T)-y|\geq \ff {3l} 4\Big)\\
&\leq \P\Big(|\tld X(T)-Y^\ep(T)|+|Y^\ep(T)-y|\geq \ff {3l} 4;\ |Y^\ep(T)-y|\leq \ff l 4\Big)+\P\Big(|Y^\ep(T)-y|> \ff l 4\Big)\\
&\leq \P\Big(|X(T)-Y^\ep(T)|\geq \ff {l} {2}\Big)+\P\Big(|Y^\ep(T)-y|> l/4\Big)\leq 1/3+1/3=2/3<1.
\end{ews*}
Therefore, we have proved the irreducibility of $P_T$.
\qed

\section{Applications and examples}
In this part, we present some applications and examples.
\beg{cor}\label{cor:Holder}
Assume that $r>1$ and the conditions of Theorem \ref{thm:sFeller} hold with $\inf_{v\in\V}\rh(v)>0$ and \eqref{locally_B_Lip} replaced by
$$|(B(u)-B(v))^*(u-v)|\leq K_2|u-v|^2,~u,v\in V,$$
for some $K_2\geq 0$. Then $P_Tf$ is $\be$-H\"older continuous on $H$ for all $f\in \sB_b(H)$ with
$$0<\be<\sup_{1-\ff {(2(r-1))\we (r+1)} {\th}<\ep<1-\ff {r-1} {\th}}\Big[\ep-\inf_{p\in(0,1)}(\al_1\vee\al_2\vee\al_3)\Big],$$
where
$$\al_1=\ff {\ep\Big[2(r-1)-\th(1-\ep)\Big]} {2(p\th+1)(r-1)-\th(1-\ep)},~~\al_2=\ff {\ep(\th-2)} {2(1-p)\th+\th-2},~~\al_3=\ff {\ep-2(1-\ga)} {p\th+1}.$$
\end{cor}
\beg{proof}
Let $\ep\in (0,1)$ such that $r-1<\th(1-\ep)<2(r-1)\we(r+1)$. Consider the coupling as \eqref{coupling}. Let $2\ga=r+1-\th(1-\ep)$. Since
$$4-4\ga=4-2(r+1-\th(1-\ep))=2\th(1-\ep)-2(r-1)<\th(1-\ep),$$
there is $C>0$ depending on $K_1,~K_2,~\de_1,~\ep,~\th,~r,~T,~|B_0|$ such that
\beg{align*}
&\E\exp\Big\{u\int_0^{T\we\ta}e^{-(1-\ga)K_1t}\ff {|X(t)-Y(t)|_{B_0}^{\th}} {|X(t)-Y(t)|^{\th\ep}}\d t-\ff {u|x-y|^{2-2\ga}} {\de_1(1-\ga)}\Big\}\\
&\quad\leq \Big(\E\exp\Big\{\ff {u^2K_2^2} {\de_1^2} \int_0^{T\we\ta}e^{-2(1-\ga)K_1t}|X(t)-y(t)|^{4-4\ga}\d t\Big\}\Big)^{1/2}\\
&\quad\leq \Big(\E\exp\Big\{\ff u {|B_0|^\th}\int_0^{T\we\ta}e^{-(1-\ga)K_1t}|X(t)-Y(t)|^{\th(1-\ep)}\d t+Cu^{[2-\ff {4-4\ga} {\th(1-\ep)}]/[1-\ff {4-4\ga} {\th(1-\ep)}]}\Big\}\Big)^{1/2}\\
&\quad\leq\Big(\E\exp\Big\{u\int_0^{T\we\ta}e^{-(1-\ga)K_1t}\ff {|X(t)-Y(t)|_{B_0}^{\th}} {|X(t)-Y(t)|^{\th\ep}}\d t+Cu^{[2-\ff {4-4\ga} {\th(1-\ep)}]/[1-\ff {4-4\ga} {\th(1-\ep)}]}\Big\}\Big)^{1/2},
\end{align*}
according to (\ref{HofA}), (\ref{ItoforX_Y}), \eqref{Ito_ga} and H\"older inequality.
So there is $C>0$ depending on $K_1$, $K_2$, $\de_1$, $\ep$, $\th$, $r$, $T$, $|B_0|$ such that
\beg{ews*}
&\E\exp\Big\{u\int_0^{s\we\ta}e^{-(1-\ga)K_1t}\ff {|X(t)-Y(t)|_{B_0}^\th} {|X(t)-Y(t)|^{\th\ep}}\d t\Big\}\\
&\quad\leq\exp\Big\{C\Big(u^{\ff {2(r-1)} {2(r-1)-\th(1-\ep)}} +u|x-y|^{2-2\ga}\Big)\Big\}<\infty,~~u\in\R^+.
\end{ews*}
Defining $R_T$ as in lemma \ref{lmm:sFeller}, we obtain that there exists $C$  depending on $K_1$, $K_2$, $\de_1$, $\ep$, $\th$, $r$, $T$, $|B_0|$ such that
\beg{align}\label{exp}
\E R_T^2&\leq \Big(\E\exp\Big\{\ff {7|x-y|^{2\al}} 2\int_0^{T\we\ta}\ff {|\tld B^{-1}(Y(t))(X(t)-Y(t))|^2} {|X(t)-Y(t)|^{2\ep}}\d t\Big\}\Big)^{1/2}\nonumber\\
&\leq \Big(\E\exp\Big\{\ff {7|x-y|^{2\al}} 2\int_0^{T\we\ta}\ff {|(X(t)-Y(t))|_{B_0}^2} {|X(t)-Y(t)|^{2\ep}}\d t\Big\}\Big)^{1/2}\nonumber\\
&\leq \Big(\E\exp\Big\{C\Big(|x-y|^{p\al\th}\int_0^{T\we\ta}\ff {|X(t)-Y(t)|_{B_0}^\th} {|X(t)-Y(t)|^{\th\ep}}\d t+ (\th-2)|x-y|^{\ff {2\al\th(1-p)} {\th-2}}\Big)\Big\}\Big)^{1/2}\\
&\leq\exp\Big\{C\Big(|x-y|^{\ff {2p\th\al(r-1)} {2(r-1)-\th(1-\ep)}}+({\th-2})|x-y|^{\ff {2\al\th(1-p)} {\th-2}}+|x-y|^{2(1-\ga)+p\th\al}\Big)\Big\}\nonumber\\
&=:\exp\{\Phi(|x-y|)\}<\infty.\nonumber
\end{align}
with $p\in(0,1)$ and if $\th=2$ we set $({\th-2})|x-y|^{\ff {2\al\th(1-p)} {\th-2}}=0$. It is clear that $\Ph$ is continuous and $\Ph(0)=0$. By Girsanov's theorem, $\{\tld W(t)\}_{t\geq0}$ is cylindrical Wiener process under the probability measure $R_T\P$. Repeating the same argument in Lemma \ref{lmm:sFeller} and via the following inequality
$$\E|1-R_T|\leq \Big(\E|1-R_T|^2\Big)^{1/2}=\Big(\E R_T^2-1\Big)^{1/2}\leq \Ph^{1/2}(|x-y|)\exp\{\Ph(|x-y|)/2\},$$
we obtain that
$$|P_Tf(x)-P_Tf(y)|\leq |f|_\infty\Big[\Ph^{1/2}(|x-y|)\exp\{\Ph(|x-y|)/2\}+|x-y|^{\ep-\al}\ff {e^{\ff {\ep K_1T} {2}}-1+T^2} {T^2}\Big].$$
So
$$|P_Tf(x)-P_Tf(y)|\leq C|x-y|^{\be}$$
with
$$\be=\ff {2p\th\al(r-1)} {2(r-1)-\th(1-\ep)}\we \ff {2\al\th(1-p)} {\th-2}\we (2(1-\ga)+p\th\al)\we (\ep-\al).$$
Since $p\in(0,1)$ and $\ep$ is arbitrary such that $(r-1)<\th(1-\ep)<(2(r-1))\we (r+1)$,
$$\be<\sup_{1-\ff {(2(r-1))\we (r+1)} {\th}<\ep<1-\ff {r-1} {\th}}\Big[\ep-\inf_{p\in(0,1)}(\al_1\vee\al_2\vee\al_3)\Big].$$
Therefore, the proof is completed.
\end{proof}
\beg{rem}
Since the coupling used here neither is optimal nor succeed in deterministic finite time, the H\"older continuity got in Lemma \ref{lmm:sFeller}, Lemma \ref{lmm:sFeller'} and the corollary above are worse comparing with results obtained in \cite{WBook13',Wang14b}.
\end{rem}

Combining Theorem \ref{thm:sFeller}, Theorem \ref{thm:sFeller'} and Theorem \ref{thm:irreducibility}, we have
\beg{cor}
The same conditions of Theorem \ref{thm:sFeller}(or Theorem \ref{thm:sFeller'}) with $\inf_{v\in\V}\rh(v)>0$ and \eqref{con_B_2} hold. Moreover, we assume that $|\cdot|_{B_0}$ is bounded on bounded sets of $\V$. Then $P_t$ has a unique invariant measure $\nu$ with full support on $H$, and for all probability measure $\mu$ on $\H$
$$\lim_{t\ra\infty} ||P^*_t\mu-\nu||_{VT}=0,$$
where $||\cdot||_{VT}$ is the total variation norm and $P_t^*$ is the adjoint operator of $P_t$.
\end{cor}
\beg{proof}
It is a direct consequence of the Doob theorem and \cite[Corollary 1.]{Stet94}.
\end{proof}

We present some specific examples to illustrate our main results. These examples have been intensively study in additive noise case. Here we do not repeat the details, one can consult \cite{Wang2007,LiuW08,Liu09,WBook13'} for more general discussion.  The non-degenerate condition \eqref{nondenerate_B} allows us to compare the multiplicative noise with the additive noise situation.

\beg{exa}\label{exa_1}
(Stochastic porous media equations)
Let $D\subset \R^d$ be a bounded domain with smooth boundary, $\mu$ be the normalized Lebesgue measure on $D$. Let $\De_D$ be the Dirichlet Laplace on $D$ and $L=-(-\De_D)^\ga$ for some constant $\ga>0$. Consider the Gel'fand triple $$L^{r+1}(D,\mu)\subset H^\ga(D,\mu)\subset (L^{r+1}(D,\mu))^*,$$
where $H^{\ga}(D,\mu)$ is the completion of $L^2(D,\mu)$ under the norm
\bequ\label{norm}
|x|:=\Big(\int_{D}|(-\De_D)^{-\ga/2}x|^2\d\mu\Big)^{1/2},~x\in L^2(D,\mu),
\enqu
and $(L^{r+1}(D))^*$ is the dual space of $L^{r+1}(D,\mu)$ w.r.t. $H^\ga(D,\mu)$. Let $$\la_1\leq \la_2\leq \cdots\leq \la_n\leq \cdots$$
be the eigenvalues of $-\De_D$ including multiplicities with unite eigenfunctions $\{e_j\}_{j\geq 1}$. Let $r>1$,
$$\Psi(s)=s|s|^{r-1},~\Ph(s)=cs,~B_0e_j=j^{-q}e_j,~j\geq 1,$$
for some constants $c\geq 0$ and $q>1/2$. Then $B_0\in \sL_2(H^\ga(D,\mu))$. Let $$B(x)e_j=b_j(x)j^{-q}e_j,~j\geq 1,$$
with $b\in \R$ such that
\beg{ews}\label{B_exa}
|b_j(x)-b_j(y)|\leq b|x-y|,&~x,~y\in H^{\ga}(D,\mu),\\
\inf_{|x|\leq R}\inf_{j\geq 1} b_j(x)>0,&~R>0.
\end{ews}
We consider the following equation under the triple introduced above
\bequ\label{exa_porous/fast}
\d X(t)=\Big(L\Ps(X(t))+\Ph(X(t))\Big)\d t+B(X(t))\d W(t),
\enqu
where $W(t)$ is cylindrical Brownian motion on $\H$ w.r.t. a complete filtered probability space $(\Om, \{\sF\}_{t\geq 0},\P)$. According to \cite[Corollary 3.1]{Wang2007}, or \cite[Example 2.4.1]{WBook13'}, if $\ga\geq dq$, then \eqref{HofA}
holds for all $\th\in (r-1,r+1]$. By \eqref{B_exa} and the definition of the noise term $B$, \eqref{nondenerate_B} holds with $\rh(x)=\inf_{j\geq 1} b_j(x)$. So the claim in Theorem \ref{thm:sFeller} holds. If $\inf_{v, j\geq1}b_j(x)>0$, then Theorem \ref{thm:irreducibility} holds.
\end{exa}

\beg{exa}
(Stochastic fast diffusion equations) Let $D=(0,1)\subset \R^1$, $\mu$, $H^\ga(D,\mu)$, $L$, $\Ph$, $\Ps$ as in Example \ref{exa_1}. Let $\ff 1 3<r<1$, $\ga=1$ $\V=L^{r+1}(D,\mu)\bigcap H^{1}(D,\mu)$ with
$$
|v|_{\V}=|v|_{L^{r+1}}+|v|,~v\in \V.$$
We consider the equation \eqref{exa_porous/fast} under the triple
$$\V\subset H^{1}(D,\mu)\subset \V^*.$$
Let $B_0$ and $B$ defined as in Example \ref{exa_1} with $q$ to be determined later. According to \cite{LiuW08}, or \cite[Example 2.4.2]{WBook13'}, for all $\th\in (\ff 4 {r+1}, \ff {6r+2} {r+1})$ and $q\in (\ff 1 2,\ff {3r+1} {\th(r+1)})$ such that \eqref{HofA'} and \eqref{H_coe'} hold with $h(v)=|v|_{\V},~v\in\V$. Then Theorem \ref{thm:sFeller'} holds.
\end{exa}
At last, we give an explicit example of $B$ satisfying \eqref{B_exa}.
\beg{exa}
Let $D\subset\R^d$, $\mu$, $\{\la_j\}_{j\geq 1}$, $\{e_j\}_{j\geq 1}$ are defined as in Example \ref{exa_1}. Let
$$b_j(x)= \ff 1 {1+j^{-\ff {2\ga} {d}}|\mu(xe_j)|},~x\in L^{r+1}(D,\mu),~j\geq 1,$$
where $\mu(x):=\int_{D}x\d \mu$. By the Weyl's formula, see \cite{Weyl}, or \cite{Gull}, there is a constant $C_d$ depending on $d$ such that $\lim_{j\ra \infty}\ff {j^{2/d}} {\la_j}=C_d \mathbf{Vol}(D)$, where $\mathbf{Vol}(D)$ means the volume of $D$. So there is $C$ independent of $j$ such that
$$\ff 1 C |\<x,e_j\>|\leq j^{-\ff {2\ga} {d}}|\mu(xe_j)|\leq C|\<x,e_j\>|,~x\in L^{r+1}(D,\mu),~j\geq 1,$$
where $\<\cdot,\cdot\>$ is the inner product induced by the norm \eqref{norm}. It is easy to check that \eqref{B_exa} holds.
\end{exa}

\bigskip

\noindent\textbf{Acknowledgements}

\medskip

The author would like to thank Professor Feng-Yu Wang for his useful suggestions.

\end{document}